\newtheorem{thm}{Theorem}[section]
\newtheorem{cor}[thm]{Corollary}
\newtheorem*{cor*}{Corollary}
\newtheorem*{thm*}{Theorem}
\newtheorem{definition}[thm]{Definition}
\theoremstyle{definition}
\theoremstyle{plain} 
\newcommand{\thistheoremname}{}
\newtheorem{genericthm}[thm]{\thistheoremname}
\newtheorem*{genericthm*}{\thistheoremname}
\newenvironment{namedthm*}[1]
{\renewcommand{\thistheoremname}{#1}%
	\begin{genericthm*}}
	{\end{genericthm*}}
\theoremstyle{remark}
\newtheorem{remark}[thm]{Remark}
\numberwithin{equation}{section}
\newcommand{\R}{\mathbb{R}}
\newcommand{\C}{\mathbb{C}}
\newcommand{\SO}{\mathrm{SO}}
\newcommand{\G}{\mathrm{G}}
\newcommand{\U}{\mathrm{U}}
\newcommand{\eps}{\varepsilon}
\renewcommand{\Re}{\operatorname{Re}}
\renewcommand{\Im}{\operatorname{Im}}
\begin{document}

\title{Static solutions to symplectic curvature flow in dimension four}

\author{Gavin Ball}
\address{\textsc{University of Wisconsin-Madison},
	\textsc{Department of Mathematics},
	\textsc{Madison, WI, USA}}
\email{gball3@wisc.edu}
\urladdr{https://www.gavincfball.com/}

\begin{abstract}
This article studies special solutions to symplectic curvature flow in dimension four. Firstly, we derive a local normal form for static solutions in terms of holomorphic data and use this normal form to show that every complete static solution to symplectic curvature flow in dimension four is K\"ahler-Einstein. Secondly, we perform an exterior differential systems analysis of the soliton equation for symplectic curvature flow and use the Cartan-K\"ahler theorem to prove a local existence and generality theorem for solitons.
\end{abstract}

\maketitle

\tableofcontents

\section{Introduction}

An almost-K\"ahler manifold $(X, \Omega, J)$ consists of a even-dimensional manifold $X$ endowed with a symplectic form $\Omega$ and a compatible almost complex structure $J.$ Together, $\Omega$ and $J$ define a Riemannian metric $g$ on $X.$ One perspective on almost-K\"ahler geometry is to fix a symplectic form $\Omega$ on $X,$ choose a compatible $J,$ and think of $J$ and $g$ as auxiliary tools used to study the symplectic geometry of $(X, \Omega)$. In this direction, symplectic curvature flow is a degenerate parabolic evolution equation for almost-K\"ahler structures introduced by Streets--Tian \cite{StreetsTian14} given by
\begin{equation}\label{eq:introsympcurvfl}
	\begin{aligned}
		\tfrac{\partial}{\partial t} \Omega &= - 2 \rho, \\
		\tfrac{\partial}{\partial t} g &= - 2 \rho^{1,1} - 2 \, \mathrm{Ric}^{2,0+0,2},
	\end{aligned}
\end{equation}
where $\rho$ is the Chern-Ricci form of $(\Omega, J)$ and $\mathrm{Ric}$ is the Ricci tensor of $g.$ The flow (\ref{eq:introsympcurvfl}) preserves the symplectic condition $d \Omega = 0$ and restricts to K\"ahler-Ricci flow in the case where $J$ is integrable.

The idea behind symplectic curvature flow is to evolve an initial almost-K\"ahler structure on $X$ towards some canonical structure. In their initial paper, Streets \& Tian prove short time existence for (\ref{eq:introsympcurvfl}), but it is to be expected that symplectic curvature flow will encounter singularities in general. Analogy with other geometric flows, especially Ricci flow, suggests that singularity formation should be modeled on \emph{soliton} solutions to (\ref{eq:introsympcurvfl}).

\begin{definition}
	An almost-K\"ahler manifold $(X, \Omega, J, g)$ is a \emph{symplectic curvature flow soliton} if there exists a constant $\lambda \in \R$ and a vector field $V$ on $X$ such that
	\begin{equation}\label{eq:intsympcurvsoliton}
		\begin{aligned}
			 \lambda \Omega + \mathcal{L}_V \Omega &= - 2 \rho, \\
			\lambda g + \mathcal{L}_V g &= - 2 \rho^{1,1} - 2 \, \mathrm{Ric}^{2,0+0,2}.
		\end{aligned}
	\end{equation}
	If $\lambda > 0, $ $ = 0,$ or $ < 0$, we say the soliton is \emph{expanding, steady,} or \emph{shrinking} respectively. If the vector field $V$ vanishes identically, then we say $(X, \Omega, J, g)$ is a \emph{static solution} to symplectic curvature flow.
\end{definition}

The soliton solutions are precisely the almost-K\"ahler structures which evolve by rescaling and diffeomorphisms along (\ref{eq:introsympcurvfl}). The static solutions are the structures which evolve purely by rescaling. For Ricci flow, the static solutions are the Einstein metrics, and so static solutions to symplectic curvature flow can be thought of as the almost-K\"ahler analogues of Einstein metrics.

\subsection{Results}
In this paper, we study soliton and static solutions to symplectic curvature flow in dimension four using the techniques of exterior differential systems and the moving frame. These techniques are well-suited to the non-linear, overdetermined PDE system (\ref{eq:intsympcurvsoliton}).

Our first main result is Theorem \ref{thm:locnormalform}, which gives a local normal form for static solutions. We summarize it here as:

\begin{thm}
	Every four-dimensional static solution to symplectic curvature flow is steady ($\lambda = 0$). If $(X^4, \Omega, J, g)$ is a static solution and $p \in X$ is a point where the Nijenhuis tensor of $J$ is non-vanishing, then there is a neighbourhood of $p$ with complex coordinates $z_1, z_2$ and a holomorphic function $h(z_1)$ such that the almost-K\"aler structure on $X$ is given by Equation (\ref{eq:finalint}). 
\end{thm}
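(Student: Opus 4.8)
The plan is to analyze the static equations using the moving frame adapted to the almost-Kähler structure in dimension four.
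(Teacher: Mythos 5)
Your proposal is not a proof but only a declaration of method: ``use the moving frame'' names the technique the paper also uses, but supplies none of the content that actually establishes either claim of the theorem. Everything substantive is missing. Concretely, to show $\lambda = 0$ you must first translate the static condition into the vanishing of specific second-order invariants ($B = 0$, $Q_{i\bar j} = 0$, $A_{ij} = 0$, $R = \lambda/4$) on the $\U(2)$-coframe bundle, then repeatedly apply $d^2 = 0$ to the reduced structure equations. This produces a cascade of integrability conditions that are \emph{not} algebraic consequences of the static equations: first $F_{ij}\overline{N_j} + \tfrac{1}{2}\eps_{ij} H \overline{N_j} = 0$, then (after adapting coframes so that $N_2 = 0$ and reducing to a $\U(1)\times\U(1)$-bundle) a dichotomy forcing the invariant $G$ to vanish, and finally the identity $d^2 N_1 \wedge \overline{\eta_2} = 0$, which yields $\lambda N_1 = 0$ and hence $\lambda = 0$ wherever the Nijenhuis tensor is nonzero. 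Without carrying out this chain of differentiations there is no route to the steadiness claim, and indeed the whole point of the argument is that the static system is \emph{non-involutive}, so the conclusion comes precisely from these hidden compatibility conditions rather than from the first-order setup.

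The second claim (the normal form with a holomorphic $h(z_1)$) requires a further, separate integration argument that your plan does not address at all: one must observe that $d(\alpha + \beta) = 0$ to reduce to a $\U(1)$-bundle, recognize that the leaves of the foliation $\eta_1 = 0$ carry a metric of constant curvature $-4$ so that a disk coordinate $z_1$ exists, deduce from $d(N_1\, dz_1)$ that $N_1 = h(z_1)/\sqrt{1 - |z_1|^2}$ with $h$ holomorphic, and then integrate the $d\eta_1$ equation to produce the coordinate $z_2$. None of these steps is routine or automatic from the phrase ``adapted moving frame,'' so as written the proposal has a gap coextensive with the entire proof.
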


The geometry of the local normal form (\ref{eq:finalint}) is constrained enough to preclude the existence of non-trivial complete static solutions.

\begin{cor}
	A complete static solution to symplectic curvature flow in dimension four is K\"ahler-Einstein.
\end{cor}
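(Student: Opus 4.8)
The plan is to reduce the corollary to a single assertion: on a complete static solution, the Nijenhuis tensor $N$ of $J$ vanishes identically. Granting this, $J$ is integrable and the almost-K\"ahler structure is K\"ahler; since Theorem \ref{thm:locnormalform} already forces $\lambda = 0$, the first static equation reads $\rho = 0$, and for an integrable $J$ the Chern-Ricci form $\rho$ coincides with the Ricci form of $g$. Hence $g$ is Ricci-flat K\"ahler, in particular K\"ahler-Einstein, and the corollary follows (the second static equation is automatic, as $\mathrm{Ric}^{2,0+0,2} = 0$ when $J$ is integrable). So the entire argument is devoted to promoting the local information in Equation (\ref{eq:finalint}) to a global rigidity statement under the completeness hypothesis.

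First I would argue by contradiction, assuming the open set $U = \{p \in X : N_p \ne 0\}$ is non-empty. On $U$, Theorem \ref{thm:locnormalform} supplies complex coordinates $z_1, z_2$ and a holomorphic function $h(z_1)$ presenting the structure explicitly as in (\ref{eq:finalint}). From this normal form I would compute the relevant geometric invariants — the pointwise norm $|N|$ and the dependence of the metric coefficients on $h$ — and isolate the coordinate direction along which $|N|$ varies. The expectation, consistent with the philosophy that (\ref{eq:finalint}) is too constrained to close up, is that $|N|$ is governed by a single real function of one variable determined by $h$, so that the locus $\{N = 0\}$ agrees with $\partial U$ and the asymptotics of the chart can be read off directly from $h$.

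The heart of the proof, and the step I expect to be the main obstacle, is the completeness analysis of the normal form. The goal is to exhibit a curve of finite $g$-length that either leaves every compact subset of $X$ or limits onto $\partial U$, and to show that no smooth complete almost-K\"ahler structure can accommodate its endpoint: a finite-length curve must, by completeness, converge to a point $q \in X$, yet approaching $\partial U$ the quantities controlling (\ref{eq:finalint}) are forced to degenerate — for instance a curvature invariant blows up as $|N| \to 0$, or $N$ is compelled to be simultaneously zero and non-zero at $q$ — contradicting smoothness of $(X, \Omega, J, g)$ at $q$. Carrying this out requires an honest estimate of geodesic distance in the metric (\ref{eq:finalint}) together with a global understanding of the holomorphic datum $h$, ruling out in particular that some non-constant choice of $h$ could yield a complete metric. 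Once the contradiction is reached we conclude $U = \emptyset$, so $N \equiv 0$ on all of $X$, and the reduction of the first paragraph finishes the proof.
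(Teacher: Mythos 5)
Your overall strategy --- reduce to showing $N \equiv 0$ under completeness, then observe that a static K\"ahler structure is K\"ahler--Einstein --- is the same as the paper's, but the central step is only described, not carried out, and your description of how it would go misses the structural input that makes it work. The paper does not study $\partial U$ or a degeneration with $\lvert N \rvert \to 0$ at all. Instead it works entirely inside the locus $\{N \neq 0\}$, which is foliated by the leaves of the integrable distribution $\eta_1 = 0$. The point is that a conformal rescaling of the induced leaf metric, namely $\lvert N_1 \rvert^2\, \overline{\eta_2}\cdot \eta_2$, has constant curvature $-4$, so the universal cover $\widehat{L}$ of each leaf is identified with the unit disk and $z_1$ is a \emph{global} coordinate on $\widehat{L}$. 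Only then can one ``read off the asymptotics of the chart'': the actual induced metric on the leaf is $\frac{dz_1 \cdot d\overline{z_1}}{\lvert h \rvert^2 (1-\lvert z_1\rvert^2)}$, for which the boundary $\lvert z_1 \rvert = 1$ is at finite distance, while $\lvert N_1 \rvert^2 = \lvert h \rvert^2/(1-\lvert z_1\rvert^2)$ \emph{blows up} there. Since $\lvert N \rvert^2$ is a continuous function on $X$, a finite-length path along which it diverges leaves every compact set, so $g$ is incomplete. Without the identification of $\widehat{L}$ with the full disk, your local chart from Theorem \ref{thm:locnormalform} could a priori cover only a small region, and no completeness contradiction can be extracted; and your proposed mechanism (a curvature invariant blowing up as $\lvert N \rvert \to 0$ near $\partial U$) is not the one that occurs.

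There is also an error in your concluding reduction. The conclusion $\lambda = 0$ in Section \ref{sect:staticsolns} is derived under the assumption that $N$ is somewhere non-vanishing (it comes from $\lambda N_1 = 0$ with $N_1 \neq 0$); once you have shown $N \equiv 0$ you may not invoke it, so you cannot conclude $\rho = 0$ or Ricci-flatness. Indeed the Fubini--Study metric on $\mathbb{CP}^2$ is a complete static solution with $\lambda \neq 0$, so the claim ``complete static $\Rightarrow$ Ricci-flat'' is false. What the static conditions (\ref{eq:statinvcond}) give in the K\"ahler case ($N=0$, $A=0$, $Q=0$, $R = \lambda/4$) is exactly $\mathrm{Ric}(g) = -\tfrac{\lambda}{8} g$, i.e.\ K\"ahler--Einstein with Einstein constant determined by $\lambda$ of arbitrary sign --- which is all the corollary asserts and all you should claim.
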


This corollary can be compared to results by Streets--Tian \cite{StreetsTian14} and Kelleher \cite{Kelleher19} on static solutions. By contrast to their techniques, our calculations are primarily local in nature. We also note that Pook \cite{pook2012homogeneous} has constructed compact static solutions to symplectic curvature flow in dimensions $n (n+1).$

The key ingredient in the proof of Theorem \ref{thm:locnormalform} is an integrability condition that must be satisfied by any static solution. The existence of this integrability condition implies that the overdetermined PDE system describing static solutions is not involutive in the sense of exterior differential systems. It is then natural to ask if the more general soliton system is involutive. Our second main result, Theorem \ref{thm:SolitonEDS}, shows that the soliton system is involutive. Hence, the Cartan-K\"ahler Theorem may be applied to prove the existence of solutions. We summarize this result here as:
\begin{thm}
	The overdetermined PDE system for four-dimensional symplectic curvature flow solutions is involutive and symplectic curvature flow solitons exist locally.
\end{thm}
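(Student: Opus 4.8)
The plan is to recast the soliton equations (\ref{eq:intsympcurvsoliton}) as an exterior differential system with an independence condition and to verify involutivity by Cartan's test, after which the Cartan--K\"ahler theorem will deliver local existence. First I would fix the adapted coframe bundle $\mathcal{F} \to X$ of the almost-K\"ahler structure, on which a tautological coframe $\eta = (\eta^1, \eta^2, \eta^3, \eta^4)$ together with the Levi-Civita (or canonical Hermitian) connection forms satisfies the first and second structure equations. The torsion of these equations is governed by the Nijenhuis tensor of $J$, so the Chern-Ricci form $\rho$, the $(1,1)$-part of $\mathrm{Ric}$, and the $(2,0)+(0,2)$-part all become explicit polynomial expressions in the torsion, the curvature, and their covariant derivatives. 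Since (\ref{eq:intsympcurvsoliton}) is second order in the structure, I would work not on $\mathcal{F}$ but on a prolongation $\Sigma$ carrying extra fibre coordinates for the curvature, the vector field $V$ and its first covariant derivative, the constant $\lambda$, and whatever further jets are needed to express both soliton equations and their immediate consequences as pointwise-algebraic relations.

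On $\Sigma$ the two soliton equations, together with the Bianchi identities and the defining formulas for $\rho$ and $\mathrm{Ric}$, cut out the locus on which any integral manifold must lie; differentiating these relations produces the ideal $\mathcal{I}$ whose integral manifolds transverse to the base (via $\eta^1 \wedge \eta^2 \wedge \eta^3 \wedge \eta^4 \neq 0$) are exactly the germs of four-dimensional symplectic curvature flow solitons. The heart of the argument is then Cartan's test: I would choose a generic flag of integral elements $0 = E_0 \subset E_1 \subset \cdots \subset E_4$, compute the reduced Cartan characters $s_0, s_1, s_2, s_3, s_4$ from the ranks of the successive polar spaces, and compare the dimension of the space of admissible integral elements with the Cartan bound $s_1 + 2 s_2 + 3 s_3 + 4 s_4$. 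Involutivity holds precisely when these two numbers agree.

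The main obstacle is exactly the integrability condition that obstructs the static system. In the language of exterior differential systems this condition appears as apparent torsion: after pulling back the structure equations, certain terms are not manifestly absorbable into the connection and the free derivative parameters. For the static ($V \equiv 0$) system these terms cannot all be absorbed and Cartan's test fails, which is the EDS shadow of Theorem \ref{thm:locnormalform}. The point of allowing a non-trivial $V$ is that the new unknowns coming from the jets of $V$ supply precisely the freedom needed to absorb this torsion, so that the former obstruction becomes an equation determining a component of the derivative of $V$ rather than a genuine constraint on the geometry. Checking that all residual torsion is absorbable, and that no new integrability condition is generated after a single prolongation, is the delicate bookkeeping step; it is here that the choice of the prolongation $\Sigma$ and the ordering of the coframe within the flag matter most.

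Once Cartan's test is passed the system is involutive, and the Cartan--K\"ahler theorem, applied in the real-analytic category, produces a local integral manifold through any admissible integral element, hence local existence of solitons. The same characters $s_0, \dots, s_4$ then quantify the generality of the solution space in the usual Cartan form, as depending on $s_q$ functions of $q$ variables, where $q$ is the largest index with $s_q \neq 0$.
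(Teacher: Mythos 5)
Your overall strategy --- recast the soliton condition as an exterior differential system / prescribed coframing problem on the adapted coframe bundle, run Cartan's test, and invoke Cartan--K\"ahler in the real-analytic category --- is the same as the paper's, which sets up precisely the system (\ref{eq:fourdCone}), (\ref{eq:structtwo}), (\ref{eq:dVectorField}), (\ref{eq:SolConds}) on the $\U(2)$-bundle and applies Theorem 3 of \cite{BryEDSNotes}. However, you misidentify where the difficulty lies. You predict that the obstruction is unabsorbable torsion, and that the jets of $V$ supply exactly the freedom needed to absorb it, converting the static integrability condition (\ref{eq:firsttorscond}) into an equation on a derivative of $V$. That heuristic correctly explains why the soliton system is better behaved than the static one, but it is not the issue that must be overcome: the first-order soliton system has no intrinsic torsion, yet its tableau of free derivatives still fails Cartan's test --- the characters are $(24,22,13,1)$, whose weighted sum is $111$, while the prolongation has dimension $106$. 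Failure of Cartan's test for a torsion-free tableau is a different phenomenon from unabsorbable torsion; it cannot be repaired by absorbing anything into connection or derivative parameters, and it forces a genuine prolongation of the system. (Conversely, when there \emph{is} intrinsic torsion, prolongation does not help --- this is exactly the situation for the gradient soliton subsystem, where $d^2 Y = 0$ produces a restriction on the 3-jet.)

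The second, and more serious, problem is that the entire content of the theorem is the computation you defer to ``delicate bookkeeping.'' The proof consists of verifying that after one prolongation the tableau has Cartan characters $(56,31,10,0)$ and that the next prolongation has dimension $148 = 56 + 2(31) + 3(10) + 4(0)$, so that Cartan's test is passed; this also yields the generality count (10 functions of 3 variables) that the statement implicitly carries. Nothing in your sketch rules out the a priori possibilities that the prolonged system again fails the test or develops new integrability conditions. So the proposal is the correct plan but not yet a proof: you need to exhibit the primary and derived invariants, compute the characters before and after prolongation, and confirm the equality required by Cartan's test.
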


In \S\ref{eg:ASL2R} we provide an explicit example of homogeneous symplectic curvature flow soliton. Further examples have appeared in \cite{LauretWill17} and explicit solutions to symplectic curvature flow have been analyzed in \cites{LauretSymp15,pook2012homogeneous,FernandezCulma15}.

\section{Structure equations}

Let $X$ be a 4-manifold endowed with an almost K\"ahler structure $\left(\Omega, J \right),$ that is to say $\Omega$ is a symplectic form on $X$ and $J$ is a $\Omega$-compatible almost complex structure on $X$. Together, $\Omega$ and $J$ determine a Riemannian metric $g$ on $X.$

Let $\mathcal{P}$ denote the $\U(2)$-structure on $X$ determined by the almost K\"ahler structure $\left( \Omega, J \right)$. Explicitly, $\pi :\mathcal{P} \to X$ is the principal $\U(2)$-bundle defined by
\begin{equation*}
	\mathcal{P} = \left\lbrace u : T_p X \to \C^2 \mid \text{$u$ is a complex linear isomorphism,} \:\:\: u^* \Omega_{\mathrm{Std}} = \Omega_p \right\rbrace,
\end{equation*}
where $\Omega_{\mathrm{Std}}$ denotes the standard K\"ahler form on $\C^2$,
\begin{equation*}
	\Omega_{\mathrm{Std}} = \tfrac{i}{2} \left( e^1 \wedge \overline{e^1} + e^2 \wedge \overline{e^2} \right).
\end{equation*}

Let $\eta$ denote the $\C^2$-valued tautological 1-form on $\mathcal{P},$ defined by $\eta (v) = u (\pi_* (v)),$ for $v \in T_u \mathcal{P}.$ Denote the components of $\eta$ with respect to the standard basis $e_1,$ $e_2$ of $\C^2$ by $\eta_1, \eta_2.$ The forms $\eta_1, \eta_2$ are a basis for the semi-basic forms on $\mathcal{P},$ and they encode the almost complex structure $J$ in the sense that a 1-form $\theta$ on $X$ is a $(1,0)$-form if and only if the pullback $\pi^* \theta$ lies in $\operatorname{span}(\eta_1, \eta_2).$ We also have that, on $\mathcal{P},$
\begin{equation*}
	\begin{aligned}
		\Omega &= \tfrac{i}{2} \eta_i \wedge \overline{\eta_i}, \\
		g &= \eta_i \cdot \overline{\eta_i},
	\end{aligned}
\end{equation*}
where $1 \leq i \leq 2$ and the unitary summation convention is employed (as it will be in the remainder of this article).

\subsection{The first structure equation}

On $\mathcal{P},$ Cartan's first structure equation reads
\begin{equation}\label{eq:CartIpre}
	d \eta_i = - \kappa_{i \overline{j}} \wedge \eta_j - \xi_{ij} \wedge \overline{\eta_j},
\end{equation}
where $\kappa_{i \overline{j}} = - \overline{\kappa_{j \overline{i}}}$ and $\xi_{ij} = -\xi_{ji}.$ Here, $\kappa$ is the $\mathfrak{u}(2)$-valued connection form for the \emph{Chern connection} on $X.$ The $\Lambda^2_{\C}$-valued 1-form $\xi_{ij}$ is $\pi$-semibasic, so that there exist functions $A_{ij \overline{k}}$ and $N_{ijk}$ on $X$ such that
\begin{equation*}
	\xi_{ij} = A_{ij\overline{k}} \eta_k + N_{ijk} \overline{\eta_k}.
\end{equation*}
It is easily checked that the symplectic condition $d \Omega = 0$ implies $A_{ij\overline{k}} = 0.$ Cartan's first structure equation (\ref{eq:CartIpre}) may therefore be rewritten as
\begin{equation}\label{eq:CartI}
	d \eta_i = - \kappa_{i \overline{j}} \wedge \eta_j + N_{ijk} \, \overline{\eta_j} \wedge \overline{\eta_k}.
\end{equation}

The tensor $ N = N_{ijk} \left(\overline{\eta_i \wedge \eta_j}\right) \otimes \overline{\eta_k}$ descends to $X$ to give a well-defined section of $\Lambda^{0,2} \otimes \Lambda^{0,1}.$ In fact, $N$ is simply the \emph{Nijenhuis tensor} of the almost complex structure $J.$

The structure equations (\ref{eq:CartI}) are valid in any even dimension. However, they may be written in a simpler form in complex dimension 2, by exploiting the fact that $\Lambda^2_\mathbb{C}$ is 1-dimensional, and this simplification is worthwhile because it leads to a simplification of the second structure equation. Let $\eps_{ij}$ denote the totally skew-symmetric symbol with $\eps_{12} = 1/2.$ Let $N_i$ denote the functions on $\mathcal{P}$ defined by
\begin{equation*}
	N_{ijk} = \eps_{ij} N_k.
\end{equation*}
The first structure equation (\ref{eq:CartI}) may be rewritten as
\begin{equation}\label{eq:fourdCone}
	d \eta_i = - \kappa_{i \overline{j}} \wedge \eta_j + \eps_{ij} N_{k} \, \overline{\eta_j} \wedge \overline{\eta_k}.
\end{equation}

\subsection{The second structure equations}

The identity $d^2 = 0$ applied to equation (\ref{eq:fourdCone}) implies the following equations:
\begin{equation}\label{eq:structtwo}
	\begin{aligned}
		d N_i =& \overline{\eps_{jk}} A_{ij} \eta_k + B \, \eta_i + \left(F_{ij} + \eps_{ij} H \right) \overline{\eta_j} - \kappa_{i \overline{j}} N_j - \kappa_{j \overline{j}} N_i, \\
		d \kappa_{i \overline{j}} =& - \kappa_{i \overline{k}} \wedge \kappa_{k \overline{j}} + K_{i\bar{j}k\bar{l}} \overline{\eta_k} \wedge \eta_l + \left(R  + N_l \overline{N_l} \right) \left(-\tfrac{1}{3} \eta_i \wedge \overline{\eta_j} - \tfrac{1}{3} \delta_{i \bar{j}} \eta_k \wedge \overline{\eta_k} \right) \\
		& + N_i \overline{N_j} \eta_k \wedge \overline{\eta_k} + Q_{i \bar{k}} \eta_k \wedge \overline{\eta_j} + Q_{k \bar{j}} \eta_i \wedge \overline{\eta_k} - \tfrac{1}{2} A_{ik} \overline{\eta_j \wedge \eta_k} + \tfrac{1}{2} \overline{A_{jk}} \eta_i \wedge \eta_k \\
		& + 2 B \, \eps_{ik} \overline{\eta_j \wedge \eta_k} - 2 \overline{B} \overline{\eps_{jk}} \eta_i \wedge \eta_k,
	\end{aligned}
\end{equation}
for functions $A_{ij}, B, F_{ij}, H, K_{i \bar{j} k \bar{l}}, Q_{i \bar{j}}, R$ on $\mathcal{P}$ having the following symmetries:
\begin{equation*}
	\begin{aligned}
		A_{ij} & = A_{ji}, & K_{i \bar{j} k \bar{l}} &= K_{k \bar{j} i \bar{l}} & Q_{i \bar{j}} &= -\overline{Q_{j \bar{i}}} \\
		F_{ij} & = F_{ji}, & K_{i \bar{j} k \bar{l}} &= K_{i \bar{l} k \bar{j}} & Q_{i \bar{i}} &= 0 \\
		 & & K_{i \bar{i} k \bar{l}} &= 0 & & \\
		 & & K_{i \bar{j} k \bar{l}} &= \overline{K_{j \bar{i} l \bar{k}}} & &  
	\end{aligned}
\end{equation*}
Each of these functions takes values in an irreducible $\U(2)$-representation, and the right-hand-side of the equation for $d \kappa$ in (\ref{eq:structtwo}) represents the irreducible decomposition of the curvature of the Chern connection of $X.$ The equations (\ref{eq:structtwo}) are called the second structure equations of the almost K\"ahler structure $(g, \Omega).$ By a classical theorem of Cartan, the functions $A_{ij}, B, F_{ij}, K_{i \bar{j} k \bar{l}}, Q_{i \bar{j}}, R$ form a complete set of second-order invariants of $(g, \Omega).$

The second equation of (\ref{eq:structtwo}) gives the curvature of the Chern connection on $X.$ The Chern-Ricci form is the trace of this curvature:
\begin{equation*}
	d \kappa_{i \overline{i}} = -R \eta_{i} \wedge \overline{\eta_i} - 4 i \Im \left(\overline{B} \eta_1 \wedge \eta_2 \right) - 2 Q_{i \overline{j}} \overline{\eta_i} \wedge \eta_j.
\end{equation*}

\subsubsection{The curvature of $g$}

The Riemann curvature tensor of a 4-dimensional manifold is a section of a vector bundle modeled on the $\SO(4)$ representation
\begin{equation*}\label{eq:curvdecompso}
	\mathrm{Sym}^2 \left( \Lambda^2 \R^4 \right) \cong \R \oplus \mathrm{Sym}^2_0 \R^4 \oplus \mathrm{Sym}^2_0 \left(\Lambda^2_+ \R^4 \right) \oplus \mathrm{Sym}^2_0 \left( \Lambda^2_- \R^4 \right).
\end{equation*}
Corresponding to each irreducible component, we have the scalar curvature $r$, the traceless Ricci curvature $\mathrm{Ric}^0$, and the self-dual $W^+$ and anti-self dual $W^-$ Weyl tensors respectively.


The second structure equations (\ref{eq:structtwo}) can be compared with the structure equations of a Riemannian manifold to write the components of the Riemann curvature tensor of $g$ in terms of the first and second order invariants of the almost K\"ahler structure. The result of this calculation is:
\begin{equation*}
	\begin{aligned}
		\mathrm{Scal}(g) &= - 8 N_i \overline{N_i} - 8 R, \\
		\mathrm{Ric}(g) &= A_{ij} \overline{\eta_i} \cdot \overline{\eta_j} + \overline{A_{ij}} \eta_i \cdot \eta_j + \left( Q_{i \bar{j}} + N_i \overline{N_j} \right) \overline{\eta_i} \cdot \eta_j - \left(\tfrac{1}{2} R + N_k \overline{N_k} \right) \eta_i \cdot \overline{\eta_i}, \\
		W^+ (g) &= \left(4 N_i \overline{N_i} - 4 R \right) \Omega^2 - 8 i B \, \Omega \cdot \left( \eta_1 \wedge \eta_2 \right) + 8 i \overline{B} \, \Omega \cdot \left( \overline{\eta_1 \wedge \eta_2 } \right) \\
		& + 2 H \, \left(\eta_1 \wedge \eta_2 \right)^2 + 2 \overline{H} \left( \overline{\eta_1 \wedge \eta_2 } \right)^2 - 4 N_i \overline{N_i} \left(\eta_1 \wedge \eta_2 \right) \cdot \left(\overline{\eta_1 \wedge \eta_2 }\right), \\
		W^-(g) &=  -2 K_{i \bar{j} k \bar{l}} \left(\overline{\eta}_i \wedge \eta_j \right) \cdot \left(\overline{\eta_k} \wedge \eta_l \right),
	\end{aligned}
\end{equation*}
where here we are viewing the Ricci tensor as a symmetric 2-tensor, and the self-dual and anti-self-dual Weyl tensors as sections of $\mathrm{Sym}^2_0 \left(\Lambda^2_{\pm} T X \right).$

\subsubsection{Symplectic curvature flow}

The right hand side of the symplectic curvature flow equation (\ref{eq:introsympcurvfl}) may also be written in terms of second and first order invariants of $\left(g, \Omega \right).$ We have
\begin{equation}\label{eq:symplcurvfloweqs}
	\begin{aligned}
		\frac{\partial}{\partial t} \Omega &= 4 R \, \Omega + 4 i Q_{i \bar{j}} \, \overline{\eta_i} \wedge \eta_{j} - 8 \, \Im \left(\overline{B} {\eta_1 \wedge \eta_2 } \right), \\
		\frac{\partial}{\partial t} g & =  4 R \, g - 8 Q_{i \overline{j}} \overline{\eta_i} \cdot \eta_j - 4 \Re ( A_{ij} \overline{\eta_i} \cdot \overline{\eta_{j}}), \\
		\frac{\partial}{\partial t} J &= \left( 8 \Im \left( \overline{B} {\eta_1 \wedge \eta_2 } \right) - 4 \Re \left( A_{ij} \overline{\eta_i} \wedge \overline{\eta_j} \right) \right) g^{-1},
	\end{aligned}
\end{equation}

\section{Static solutions in dimension four}\label{sect:staticsolns}

We now study static solutions to symplectic curvature flow in dimension 4. These are solutions which evolve strictly by rescaling under the flow, so for such a solution we must have
\begin{equation}
	\begin{aligned}
		\frac{\partial}{\partial t} \Omega &= \lambda \Omega, \\
		\frac{\partial}{\partial t} g &= \lambda g, \\
		\frac{\partial}{\partial t} J &= 0.
	\end{aligned}
\end{equation}
Comparing with (\ref{eq:symplcurvfloweqs}), we see that the static almost K\"ahler structures are characterized by the following conditions on their second order invariants:
\begin{equation}\label{eq:statinvcond}
	\begin{aligned}
		R = \frac{\lambda}{4}, \:\:\: B = 0, \:\:\: Q_{i \bar{j}} = 0, \:\:\: A_{ij} = 0.
	\end{aligned}
\end{equation}
Therefore, the first and second structures equations for a static solution reduce to
\begin{equation*}
	\begin{aligned}
		d \eta_i &= - \kappa_{i \overline{j}} \wedge \eta_j + \eps_{ij} N_{k} \, \overline{\eta_j} \wedge \overline{\eta_k}, \\
		d N_i =& \left(F_{ij} + \eps_{ij} H \right) \overline{\eta_j} - \kappa_{i \overline{j}} N_j - \kappa_{j \overline{j}} N_i, \\
		d \kappa_{i \overline{j}} =& - \kappa_{i \overline{k}} \wedge \kappa_{k \overline{j}} + K_{i\bar{j}k\bar{l}} \overline{\eta_k} \wedge \eta_l + \left(\tfrac{\lambda}{4}  + N_l \overline{N_l} \right) \left(-\tfrac{1}{3} \eta_i \wedge \overline{\eta_j} - \tfrac{1}{3} \delta_{i \bar{j}} \eta_k \wedge \overline{\eta_k} \right) \\
		& + N_i \overline{N_j} \eta_k \wedge \overline{\eta_k} + Q_{i \bar{k}} \eta_k \wedge \overline{\eta_j}.
	\end{aligned}
\end{equation*}
Differentiating the second equation, we find
\begin{equation*}\label{eq:firstcondition}
	0 = d^2 N_i \wedge \overline{\eta_1 \wedge \eta_2} = \left(F_{ij} \overline{N_j} + \frac{1}{2} \eps_{ij} H \overline{N_j} \right) \eta_1 \wedge \eta_2 \wedge \overline{\eta_1 \wedge \eta_2}.
\end{equation*}
The equations
\begin{equation}\label{eq:firsttorscond}
	F_{ij} \overline{N_j} + \frac{1}{2} \eps_{ij} H \overline{N_j} = 0, \:\:\: i = 1, 2
\end{equation}
must therefore be satisfied by any static almost K\"ahler structure. These equations give a restriction on the second-order invariants of a static solution which is not an algebraic consequence of (\ref{eq:statinvcond}), so the static equations (\ref{eq:statinvcond}) are not involutive in the sense of exterior differential systems.

The equation (\ref{eq:firsttorscond}) may be simplified by adapting coframes. Suppose $N$ is non-zero at a point $p \in X$. Say a coframe $u \in \mathcal{P}_p$ is $N$-adapted if $N_2 = 0$ at $u \in \mathcal{P}.$ The group $\U(2)$ acts transitively on $\Lambda^{0,2} \otimes \Lambda^{0,1}$ with stabilizer $\U(1) \times \U(1),$ so the collection of all $N$-adapted coframes is a $\U(1) \times \U(1)$-bundle over the locus in $X$ where $N \neq 0.$ For simplicity, we shall assume from now on $N$ is nowhere vanishing on $X$ (otherwise restrict to the open dense locus where $N \neq 0$). Denote the bundle of $N$-adapted coframes by $\mathcal{P'} \to X.$

Equations (\ref{eq:firsttorscond}) imply that, on $\mathcal{P}',$
\begin{equation*}
	F_{11} = 0, \:\:\: F_{12} = \tfrac{1}{2} H.
\end{equation*}
Differentiating the identity $N_2 = 0,$ we find that on $\mathcal{P}'$
\begin{equation*}
	0 = F_{22} \overline{\eta_2} - N_1 \kappa_{2 \bar{1}}.
\end{equation*}
Define $G = N_1 F_{22},$ so we have $\kappa_{2 \bar{1}} = G \, \overline{\eta_2}.$ Let us also define $\R$-valued 1-forms $\alpha = - i \kappa_{1 \bar{1}}$ and $\beta = - i \kappa_{2 \bar{2}}.$ The forms $\alpha$ and $\beta$ together define a connection on the $\U(1) \times \U(1)$-bundle $\mathcal{P}'.$ The structure equations restricted to $\mathcal{P}'$ now read
\begin{equation*}
	\begin{aligned}
		d \eta_1 =& i \alpha \wedge \eta_1 + N_1 \overline{\eta_1 \wedge \eta_2}, \\
		d \eta_2 =& i \beta \wedge \eta_2 + G \eta_1 \wedge \overline{\eta_2}, \\
		d N_1 =& H \overline{\eta_2} + 2 i N_1 \alpha + i N_1 \beta, \\
		d \alpha =& i \left(\tfrac{1}{3} \lvert N_1 \rvert^2 - \tfrac{1}{6} \lambda - K_{1 \bar{1} 1 \bar{1}} \right) \eta_1 \wedge \overline{\eta_1} + i K_{2 \bar{2} 2 \bar{1}} \eta_1 \wedge \overline{\eta_2} - i K_{1 \bar{1} 1 \bar{2}} \eta_2 \wedge \overline{\eta_1} \\
		& + i \left(\tfrac{2}{3} \lvert N_1 \rvert^2 - \tfrac{1}{12} \lambda + \lvert G \rvert^2 + K_{1 \bar{1} 1 \bar{1}} \right) \eta_2 \wedge \overline{\eta_2}, \\
		d \beta =& i \left(-\tfrac{1}{3} \lvert N_1 \rvert^2 - \tfrac{1}{12} \lambda + K_{1 \bar{1} 1 \bar{1}} \right) \eta_1 \wedge \overline{\eta_1} - i K_{2 \bar{2} 2 \bar{1}} \eta_1 \wedge \overline{\eta_2} + i K_{1 \bar{1} 1 \bar{2}} \eta_2 \wedge \overline{\eta_1} \\
		& + i \left(-\tfrac{2}{3} \lvert N_1 \rvert^2 - \tfrac{1}{6} \lambda - \lvert G \rvert^2 - K_{1 \bar{1} 1 \bar{1}} \right) \eta_2 \wedge \overline{\eta_2}
	\end{aligned}
\end{equation*}

The identities $d^2 \eta_i = 0$ imply
\begin{equation*}
	\begin{aligned}
		K_{1 \bar{1} 1 \bar{2}} &= 0, \:\:\:\: K_{2 \bar{2} 2 \bar{1}} = 0, \\
		K_{1 \bar{1} 1 \bar{1}} &= \tfrac{1}{3} \lvert N_1 \rvert^2 + \tfrac{1}{12} \lambda - \lvert G \rvert^2, \\
		d \, G &= G_{\bar{1}} \eta_1 + G_{2} \overline{\eta_2} - i G \alpha + 2 i G \beta,
	\end{aligned}
\end{equation*}
for some $\C$-valued functions $G_{\bar{1}}$ and $G_2$ on $\mathcal{P}'.$ The identity $d^2 \alpha = 0$ implies $G G_2 = 0,$ so $G_2$ must vanish identically on $\mathcal{P}'$ (since the vanishing of $G$ implies the vanishing of $G_2$).

Next, differentiating the identity $\kappa_{2 \bar{1}} = G \, \overline{\eta_2}$ implies
\begin{equation*}
	K_{1\bar{2}1\bar{2}} = - \overline{G_{\bar{1}}}, \:\:\:\:\: K_{2\bar{1}2\bar{1}} = - G_{\bar{1}}.
\end{equation*}
The identity $d^2 G = 0$ yields
\begin{equation*}
	N_1 G_{\bar{1}} \eta_1 \wedge \overline{\eta_1 \wedge \eta_2} + G \left( 3 \lvert N_1 \rvert^2 - \tfrac{1}{2} \lambda \right) \overline{\eta_2} \wedge \eta_1 \wedge \eta_2 = 0,
\end{equation*}
leading to two possibilities:
\begin{enumerate}
	\item $G$ vanishes identically on $\mathcal{P}$; or
	\item $\lvert N_1 \rvert^2 = \tfrac{1}{6} \lambda$ and $G_{\bar{1}} = 0$ on $\mathcal{P}'.$
\end{enumerate}
Let us suppose case (2) holds. Differentiating $\lvert N_1 \rvert^2 = \tfrac{1}{6} \lambda$ implies $H = 0$ on $\mathcal{P},$ so we have
\begin{equation*}
	d N_1 = 2 i N_1 \alpha + i N_1 \beta.
\end{equation*}
The identity $d^2 N_1$ then implies $\lvert N_1 \rvert^2 = 0,$ contradicting our assumption that $N$ is non-zero. Therefore case (2) is not possible and we must instead have case (1) holding: $G = 0$ on $\mathcal{P}'.$

Finally, the identity $d^2 N_1 \wedge \overline{\eta_2} = 0$ implies that $\lambda N_1 = 0,$ so we must have $\lambda = 0$ for any non-trivial static solution to symplectic curvature flow. 

At this stage, the structure equations have simplified to
\begin{equation}\label{eq:finishedstructeqs}
	\begin{aligned}
		d \eta_1 =& i \alpha \wedge \eta_1 + N_1 \overline{\eta_1 \wedge \eta_2}, \\
		d \eta_2 =& i \beta \wedge \eta_2, \\
		d N_1 =& H \overline{\eta_2} + 2 i N_1 \alpha + i N_1 \beta, \\
		d \alpha =& i \lvert N_1 \rvert^2 \eta_2 \wedge \overline{\eta_2}  \\
		d \beta =& -i \lvert N_1 \rvert^2 \eta_2 \wedge \overline{\eta_2}.
	\end{aligned}
\end{equation}

It is easy to check that equations (\ref{eq:finishedstructeqs}) represent an involutive prescribed coframing problem in the sense of \cite{BryEDSNotes}. The primary invariants are the real and imaginary parts of the function $N_1,$ the free derivatives are the real and imaginary parts of $H$ and the tableau of free derivatives is equivalent to the standard Cauchy-Riemann tableau
\begin{equation*}
	\begin{bmatrix}
		x & y \\
		-y & x
	\end{bmatrix}.
\end{equation*} 
Therefore, Theorem 3 of \cite{BryEDSNotes} may be applied to show that non-trivial static solutions to symplectic curvature flow exist locally and depend on two functions of one variable.

We summarize the conclusions of this section in the following theorem.

\begin{thm}
	A non-trivial static solution to symplectic curvature flow in dimension 4 must have $\lambda = 0.$ Real analytic non-trivial static solutions exist locally and depend on two functions of one variable in the sense of exterior differential systems.
\end{thm}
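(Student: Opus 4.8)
The plan is to assemble both assertions of the theorem from a single prolongation of the static structure equations, treating the static conditions (\ref{eq:statinvcond}) as the starting data and repeatedly imposing $d^2 = 0$ until the system closes up. First I would substitute $R = \tfrac{\lambda}{4}$, $B = 0$, $Q_{i\bar{j}} = 0$, $A_{ij} = 0$ into the reduced first and second structure equations. The essential simplification is the frame reduction: since we assume $N \neq 0$, I would pass to the $\U(1)\times\U(1)$-bundle $\mathcal{P}'$ of $N$-adapted coframes on which $N_2 = 0$. This cuts the active invariants down to $N_1$, $G$, $H$, and the single curvature scalar $K_{1\bar{1}1\bar{1}}$, the remaining components $K_{1\bar{1}1\bar{2}}$, $K_{2\bar{2}2\bar{1}}$ being killed by $d^2\eta_i = 0$, which also expresses $K_{1\bar{1}1\bar{1}}$ algebraically in terms of $|N_1|^2$, $\lambda$, and $|G|^2$.

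Next I would track the derivatives of the surviving invariants. Differentiating $N_2 = 0$ yields $\kappa_{2\bar{1}} = G\,\overline{\eta_2}$; the identity $d^2\alpha = 0$ forces $G_2 = 0$; and differentiating $\kappa_{2\bar{1}} = G\,\overline{\eta_2}$ determines $K_{1\bar{2}1\bar{2}}$ and $K_{2\bar{1}2\bar{1}}$ in terms of $G_{\bar{1}}$. The decisive step is $d^2 G = 0$, which produces the dichotomy: either $G \equiv 0$, or $|N_1|^2 = \tfrac{1}{6}\lambda$ together with $G_{\bar{1}} = 0$. I expect this case analysis to be the crux of the argument. In the second branch, differentiating $|N_1|^2 = \tfrac{1}{6}\lambda$ forces $H = 0$, after which $d^2 N_1 = 0$ collapses to $|N_1|^2 = 0$, contradicting $N \neq 0$; ruling this branch out leaves $G \equiv 0$. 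A final application of $d^2 N_1 \wedge \overline{\eta_2} = 0$ then gives $\lambda N_1 = 0$, so $\lambda = 0$, which is the first assertion.

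For the second assertion, I would read the fully reduced equations (\ref{eq:finishedstructeqs}) as a prescribed coframing problem in the sense of \cite{BryEDSNotes}. The structure can be read off directly: the primary invariants are $\Re N_1$ and $\Im N_1$, the genuinely free derivatives are $\Re H$ and $\Im H$, and the tableau of free derivatives is the standard Cauchy--Riemann tableau, which is involutive with last nonzero Cartan character equal to $1$. I would then confirm that all torsion has been absorbed — that is, every remaining $d^2 = 0$ identity holds identically on $\mathcal{P}'$ after the reductions above — so the hypotheses of the coframing form of the Cartan--K\"ahler theorem (Theorem 3 of \cite{BryEDSNotes}) are met. Applying it gives local existence of real-analytic non-trivial static solutions, and the Cartan characters dictate the generality: the last nonzero character being $1$ in the Cauchy--Riemann tableau translates into dependence on two functions of one variable.

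The main obstacle, as anticipated, is the sustained bookkeeping in the prolongation: each $d^2 = 0$ identity introduces new derivative functions (the $G_{\bar{1}}, G_2$, and the various $K$-components) that must be chased through several rounds before the system becomes a closed coframing. The genuinely surprising feature, and the point requiring care, is that prolongation forces \emph{both} $G = 0$ and $\lambda = 0$; this rigidity is precisely the manifestation of the hidden torsion (\ref{eq:firsttorscond}) showing that the naive static system (\ref{eq:statinvcond}) is not involutive, and it is what makes the prolongation terminate in so constrained a form.
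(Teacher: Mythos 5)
Your proposal is correct and follows essentially the same route as the paper: the same reduction to the $N$-adapted bundle $\mathcal{P}'$, the same chain of $d^2=0$ identities producing the dichotomy from $d^2G=0$, the same contradiction eliminating the branch $|N_1|^2=\tfrac{1}{6}\lambda$, the conclusion $\lambda=0$ from $d^2N_1\wedge\overline{\eta_2}=0$, and the same identification of (\ref{eq:finishedstructeqs}) as an involutive prescribed coframing problem with Cauchy--Riemann tableau yielding generality of two functions of one variable via Theorem 3 of \cite{BryEDSNotes}. No gaps to report.
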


\begin{remark}
	We will show in the following subsection that any static solution must be real analytic.
\end{remark}

\subsubsection{Integrating the structure equations}

In this subsection we shall integrate the structure equations (\ref{eq:finishedstructeqs}) to obtain a local normal form for four-dimensional static solutions to symplectic curvature flow. This local normal form will allow us to draw conclusions on the global structure of such solutions.

We begin by noting that $d \left(\alpha + \beta \right) = 0,$ so we have locally $\beta = -\alpha + d g$ for some function $g.$ The function $g$ may be integrated away in the structure equations, so we may assume we are working on the $\U(1)$-subbundle $\mathcal{P}'' \to X$ where $\beta = -\alpha.$

The distribution $\eta_1 = 0$ descends from $\mathcal{P}''$ to $X$ to give a well-defined real codimension-two distribution on $X.$ Each leaf of the resulting foliation has a metric given by the restriction of $\left\lvert N_1 \right\rvert^2 \overline{\eta_2} \cdot \eta_2,$ and the equations
\begin{equation*}
	\begin{aligned}
		d \left(N_1 \overline{\eta_2}\right) &= 2 i \alpha \wedge \left(N_1 \overline{\eta_2}\right), \\
		d \alpha &= -i \left(N_1 \overline{\eta_2}\right) \wedge \overline{\left(N_1 \overline{\eta_2}\right)}
	\end{aligned}
\end{equation*}
imply that this metric has constant curvature $-4.$ It follows that if $U \subset X$ is simply-connected, then there exists a $\C$-valued function $z_1$ and an $\R$-valued function $s$ on $\mathcal{P''}|_U$ such that
\begin{equation*}
	N_1 \, \overline{\eta_2} = \frac{e^{is} \, d z_1}{1 - \left\lvert z_1 \right\rvert^2}, \:\:\:\: \alpha = -\frac{i}{2} \frac{\overline{z_1} d z_1 - z_1 d \overline{z_1}}{1 - \left\lvert z_1 \right\rvert^2} + ds.
\end{equation*}
We restrict to the locus $s = 0.$ This amounts to restricting the $\U(1)$-structure $\mathcal{P}''$ to an $\left\lbrace e \right\rbrace$-structure over $U$. 

The equation
\begin{equation*}
	d \left( N_1 d z_1 \right) = \frac{1}{2} \frac{N_1 z_z}{1 - \lvert z_1 \rvert^2} d \overline{z_1} \wedge d z_1
\end{equation*}
implies
\begin{equation}\label{eq:nijint}
	N_1 = \frac{h(z_1)}{\sqrt{1 - \lvert z_1 \rvert^2}}
\end{equation}
for some holomorphic function $h$ of a single complex variable. The $d \eta_1$ equation in (\ref{eq:finishedstructeqs}) implies
\begin{equation*}
	d \begin{bmatrix}
		\eta_1 \\
		\overline{\eta_1}
	\end{bmatrix} = - \frac{1}{2} \frac{1}{1 - \lvert z_1 \rvert^2} \begin{bmatrix}
	 \overline{z_1} d z_1 - z_1 d \overline{z_1} & 2 \, d z_1 \\
	2 \, d \overline{z_1} & z_1 d \overline{z_1} - \overline{z_1} d z_1
\end{bmatrix} \wedge \begin{bmatrix}
\eta_1 \\
\overline{\eta_1}
\end{bmatrix}.
\end{equation*}
It follows that
\begin{equation*}
	d \left( \frac{\eta_1 + z_1 \overline{\eta_1}}{\sqrt{1-\lvert z_1 \rvert^2}} \right) = 0,
\end{equation*}
so there exists a coordinate $z_2$ on $U$ with
\begin{equation*}
	\eta_1 = \frac{d z_2 - z_1 d \overline{z_2}}{\sqrt{1-\lvert z_1 \rvert^2}}.
\end{equation*}
The coordinate $z_2$ is unique up to addition of a constant.

We have now proven the first part of the following theorem. The second part follows by reversing the steps above.

\begin{thm}\label{thm:locnormalform}
	Let $\left(X, \Omega, g \right)$ be a non-trivial 4-dimensional static solution to symplectic curvature flow and suppose $p \in X$ is a point where the Nijenhuis tensor is non-vanishing. Then there is a local neighbourhood of $p$ with complex coordinates $z_1$ and $z_2$ and a holomorphic function $h(z_1)$ such that the symplectic form $\Omega$ and metric $g$ on $X$ are given by
	\begin{equation}\label{eq:finalint}
		\begin{aligned}
			\Omega &= \frac{i}{2} \left( \frac{d z_1 \wedge d \overline{z_1} }{\lvert h(z_1) \rvert^2 \left(1 - \lvert z_1 \rvert^2 \right)} + d z_2 \wedge d \overline{z_2} \right), \\
			g &= \frac{d z_1 \cdot d \overline{z_1} }{\lvert h(z_1) \rvert^2 \left(1 - \lvert z_1 \rvert^2 \right)} + \frac{1 + \lvert z_1 \rvert^2}{1 - \lvert z_1 \rvert^2} d z_2 \cdot d \overline{z_2} - \Re \left(\frac{\overline{z_1} d z_2^2}{1 - \lvert z_1 \rvert^2}\right).
		\end{aligned}
	\end{equation}

Conversely, let $h(z_1)$ be a meromorphic function on the unit disk and let $\Sigma \subset \C$ be the subset of the unit disk where $h(z_1)$ has no zeros or poles. Then equation (\ref{eq:finalint}) defines a static solution to symplectic curvature flow on the 4-manifold $\Sigma \times \C.$
\end{thm}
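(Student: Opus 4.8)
The statement has two halves, and I would treat them in turn. The forward implication is all but finished: the reduction to an $\{e\}$-structure over a simply connected neighbourhood has already produced the coframe $(\eta_1,\eta_2)$, the complex coordinates $z_1,z_2$, and the holomorphic function $h$ with $N_1 = h(z_1)/\sqrt{1-\lvert z_1\rvert^2}$. To finish it I would simply assemble this data into $\Omega$ and $g$. Writing $\eta_1 = (dz_2 - z_1\,d\overline{z_2})/\sqrt{1-\lvert z_1\rvert^2}$, and recovering $\eta_2$ from $N_1\overline{\eta_2} = dz_1/(1-\lvert z_1\rvert^2)$ together with \eqref{eq:nijint}, one finds $\eta_1\wedge\overline{\eta_1} = dz_2\wedge d\overline{z_2}$ while the $\eta_2$ terms account for the $z_1$-part. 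Substituting into $\Omega = \tfrac{i}{2}\,\eta_i\wedge\overline{\eta_i}$ and $g = \eta_i\cdot\overline{\eta_i}$ and expanding the symmetric product $\eta_1\cdot\overline{\eta_1}$ produces the conformal factor $(1+\lvert z_1\rvert^2)/(1-\lvert z_1\rvert^2)$ on $dz_2\cdot d\overline{z_2}$ together with the off-diagonal $\Re\!\left(\overline{z_1}\,dz_2^2/(1-\lvert z_1\rvert^2)\right)$ term, recovering \eqref{eq:finalint}. This is bookkeeping once the coframe is known, so I would keep it brief.

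For the converse I would run the same construction in reverse, the point being that every deduction made in passing from the structure equations \eqref{eq:finishedstructeqs} to \eqref{eq:finalint} is in fact an equivalence. Concretely, given $h$ meromorphic on the unit disk and $\Sigma$ the open set where $h$ is finite and nonzero, I take $z_1$ as the coordinate on $\Sigma$ (so $\lvert z_1\rvert < 1$) and $z_2$ as the coordinate on the $\C$ factor, and \emph{define} $\eta_1,\eta_2$ by the formulas above. Declaring $\eta_1,\eta_2$ to span the $(1,0)$-forms fixes an almost complex structure $J$, and \eqref{eq:finalint} then defines $\Omega$ and $g$. I would first check that $g$ is positive definite, which is exactly where $\lvert z_1\rvert<1$ enters: the $dz_2\,d\overline{z_2}$ block, as a real quadratic form, has positive trace and determinant $(1+\lvert z_1\rvert^2+\lvert z_1\rvert^4)/(1-\lvert z_1\rvert^2)^2 > 0$. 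Together with $d\Omega = 0$, this makes $(\Sigma\times\C,\Omega,J)$ a genuine almost-Kähler manifold.

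The core of the converse is then a direct computation of $d\eta_1$ and $d\eta_2$ from the coordinate formulas. These reproduce the first two lines of \eqref{eq:finishedstructeqs} for the explicit real connection form $\alpha = -\tfrac{i}{2}(\overline{z_1}\,dz_1 - z_1\,d\overline{z_1})/(1-\lvert z_1\rvert^2)$, with $\beta = -\alpha$, $N_1 = h/\sqrt{1-\lvert z_1\rvert^2}$, and $\kappa_{2\bar1} = 0$; crucially, the antiholomorphic derivatives of $h$ drop out precisely because $h$ is holomorphic, which is what forces the torsion and the invariant $G$ into the special static form. Since the coframe together with $J$ determines the Chern connection uniquely, the $\mathfrak{u}(2)$-valued form just identified \emph{is} the Chern connection of $(\Omega,J)$. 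Differentiating $\alpha$ recovers the remaining lines of \eqref{eq:finishedstructeqs}, and comparing the resulting $d\kappa_{i\bar j}$ term-by-term with the general second structure equation \eqref{eq:structtwo} lets me read off the full curvature: one finds $A_{ij} = 0$, $B = 0$, $Q_{i\bar j} = 0$ and $R = 0$. By \eqref{eq:statinvcond} these are exactly the conditions characterizing a static solution with $\lambda = 0$, and since they are pointwise identities in $z_1,z_2$ they hold throughout $\Sigma\times\C$, giving a global static solution; the same explicit formulas show every static solution is real analytic, as promised in the remark.

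The step I expect to be the main obstacle is this last curvature comparison, rather than the first structure equation. Reproducing $d\eta_1,d\eta_2$ only pins down $\alpha$ and the torsion; to conclude that the structure is static I must verify that the \emph{curvature} invariants $A,B,Q$ vanish and $R = 0$, which requires computing $d\kappa$ and matching against \eqref{eq:structtwo}. This is where the holomorphicity of $h$ must be used in full, and where a misnormalization of the coframe would surface as a failure of $d^2 = 0$. I would organize the verification by first determining $\kappa$ from $d\eta_i$ using skew-Hermiticity, then computing $d\kappa$ and checking each irreducible component against \eqref{eq:structtwo}; the closure identities $d^2\eta_i = d^2\kappa_{i\bar j} = 0$, already established in the forward direction, are what ultimately certify that \eqref{eq:finalint} is a solution.
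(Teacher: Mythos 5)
Your proposal is correct and follows essentially the same route as the paper: the forward direction is the assembly of $\Omega=\tfrac{i}{2}\eta_i\wedge\overline{\eta_i}$ and $g=\eta_i\cdot\overline{\eta_i}$ from the coframe already integrated in \S\ref{sect:staticsolns}, and the converse is obtained by reversing those steps, which the paper states in one line and you usefully unpack (positivity of $g$ for $\lvert z_1\rvert<1$, identification of the Chern connection, and verification that $A_{ij}=B=Q_{i\bar j}=R=0$). No substantive difference in approach.
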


The local normal form of Theorem \ref{thm:locnormalform} is strong enough to derive global consequences.

\begin{cor}
	There are no non-trivial complete static solutions to symplectic curvature flow in dimension 4.
\end{cor}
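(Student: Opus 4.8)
The plan is to run the explicit local normal form \eqref{eq:finalint} of Theorem~\ref{thm:locnormalform} against the completeness hypothesis, using as the key globally-defined invariant the squared norm of the Nijenhuis tensor, which in the normal-form coordinates is $\lvert N\rvert^2 = \lvert h(z_1)\rvert^2/(1-\lvert z_1\rvert^2)$. Since $\lvert N\rvert^2$ is a continuous function on $X$, any curve of finite length must converge to a point of $X$ at which $\lvert N\rvert^2$ is finite; so it suffices to exhibit a curve of finite length along which $\lvert N\rvert^2\to\infty$. This will contradict completeness unless $h\equiv 0$, i.e.\ unless the solution is K\"ahler.

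First I would reduce to a two-dimensional problem. Reading off \eqref{eq:finalint}, the metric splits orthogonally into a $z_1$-block $\sigma = \frac{dz_1\cdot d\overline{z_1}}{\lvert h(z_1)\rvert^2(1-\lvert z_1\rvert^2)}$ and a $z_2$-block, and $\sigma$ depends only on $z_1$. Hence $(z_1,z_2)\mapsto z_1$ is a Riemannian submersion of $X$ onto the surface $(\Sigma,\sigma)$, where $\Sigma$ is the $z_1$-image inside the unit disk with the zeros and poles of $h$ deleted. If $X$ is complete then, lifting base geodesics to horizontal geodesics, $(\Sigma,\sigma)$ is complete. Completeness also propagates the chart: were the $z_1$-image to have a frontier point $q$ inside the disk with $h(q)$ finite and nonzero, then $\sigma$ would extend smoothly across $q$ and a finite-length curve would limit onto the missing point $q$; so in fact $\Sigma$ must be the entire punctured disk.

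It then remains to prove that $(\Sigma,\sigma)$ is never complete for $h\not\equiv 0$. Near a pole of $h$ of order $k$ one has $\lvert h\rvert^{-1}\sim c\,\lvert z_1-a\rvert^{k}$, so the radial integral $\int \lvert h\rvert^{-1}(1-\lvert z_1\rvert^2)^{-1/2}\,\lvert dz_1\rvert$ converges: the pole is reached in finite $\sigma$-length while $\lvert N\rvert^2\to\infty$, so completeness already forces $h$ to be holomorphic. For holomorphic $h$ the only surviving mechanism for completeness is that every path to the circle $\lvert z_1\rvert=1$ has infinite length. Along a ray $z_1=re^{i\theta}$ on which $h$ is zero-free the length is $\int_0^1 \lvert h(re^{i\theta})\rvert^{-1}(1-r^2)^{-1/2}\,dr$, and since $(1-r^2)^{-1/2}$ is integrable this is finite unless $\lvert h(re^{i\theta})\rvert\to 0$ as $r\to 1$. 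Thus completeness would force $h$ to decay to zero along almost every radius; I would exclude this using the subharmonicity of $\log\lvert h\rvert$, whose sub-mean-value inequality gives $\frac{1}{2\pi}\int_0^{2\pi}\log\lvert h(re^{i\theta})\rvert\,d\theta\ge \log\lvert h(0)\rvert$ for all $r<1$ (after normalizing $h(0)\neq 0$), which via Fatou's lemma is incompatible with $\log\lvert h(re^{i\theta})\rvert\to-\infty$ on a set of full measure. A ray of finite length therefore exists, giving the desired finite-length curve with $\lvert N\rvert^2\to\infty$.

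The step I expect to be the main obstacle is precisely this last boundary estimate, namely that the circle $\lvert z_1\rvert = 1$ lies at finite $\sigma$-distance. The intuition is clean: one has $\sigma=\lvert N\rvert^{-2}g_{\mathrm{hyp}}$ with $g_{\mathrm{hyp}}=\frac{dz_1\cdot d\overline{z_1}}{(1-\lvert z_1\rvert^2)^2}$ the complete hyperbolic metric, and although the ideal boundary is at infinite $g_{\mathrm{hyp}}$-distance, the conformal factor $\lvert N\rvert^{-2}$ decays there, so $\sigma$ reaches it in finite length. Making this rigorous amounts to a genuine complex-analytic input — that a nonzero holomorphic function cannot decay to zero along almost every radius — and the case of \emph{unbounded} $h$ requires more care than the bounded case handled directly by the Fatou argument above; this is where I anticipate the real work, the Riemannian-submersion reduction and the treatment of poles being essentially routine.
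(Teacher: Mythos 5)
Your overall route is the same as the paper's: pass to the normal form \eqref{eq:finalint}, reduce to the two--dimensional conformal metric $\sigma = \lvert h(z_1)\rvert^{-2}(1-\lvert z_1\rvert^2)^{-1}\,dz_1\cdot d\overline{z_1}$ (your Riemannian--submersion reduction is equivalent to the paper's restriction to a leaf of the foliation $\eta_1=0$, whose induced metric is exactly $\sigma$), and then argue that the circle $\lvert z_1\rvert=1$ lies at finite $\sigma$-distance while $\lvert N_1\rvert^2$ blows up there. You have also correctly located the one step with real content --- the finite-distance claim --- which the paper itself states without justification.

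The genuine gap is that your proposed repair of that step does not close it. First, the reverse-Fatou/sub-mean-value argument needs an integrable majorant for $\log\lvert h(re^{i\theta})\rvert$ uniform in $r$, which you only have when $h$ is bounded; you flag this, but the issue is not merely technical. The complex-analytic statement you want to invoke --- that a nonzero holomorphic function cannot tend to $0$ along almost every radius --- is \emph{false} for general holomorphic functions on the disk: outside the Nevanlinna class there are classical examples of nonconstant holomorphic functions with radial limit $0$ at \emph{every} boundary point (they are large only on a thin spiralling set, consistently with the maximum principle). For such an $h$ every radial integral $\int_0^1 \lvert h(re^{i\theta})\rvert^{-1}(1-r^2)^{-1/2}\,dr$ could a priori diverge, so radii alone cannot finish the argument; one must either produce a non-radial path threading the region where $\lvert h\rvert$ stays bounded below, or feed in additional structure you are not currently using --- for instance that $h$ is zero-free on the disk (since $N_1\neq 0$ on the leaf), so $\log\lvert h\rvert$ is genuinely harmonic, or that the leaf is complete not only for $\sigma$ but also for the hyperbolic metric $\lvert N_1\rvert^2\,\overline{\eta_2}\cdot\eta_2$. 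Two smaller points: the discussion of poles of $h$ is moot for this corollary, because the forward direction of Theorem \ref{thm:locnormalform} produces a holomorphic, non-vanishing $h$ on the region where $N\neq 0$; and the conclusion should be phrased as ``the open set $\{N\neq 0\}$ is empty'' rather than ``$h\equiv 0$'', since the normal form is only valid on that set.
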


\begin{proof}
	Let $L$ be a leaf of the foliation $\eta_1 = 0$ on $X$ and let $\widehat{L}$ denote the universal cover of $L.$ The function $z_1 : \widehat{L} \to \mathbb{C}$ and holomorphic function $h(z_1)$ exist globally on $\widehat{L},$ and $z_1$ identifies $\widehat{L}$ with the unit disk $\mathbb{D} \subset \mathbb{C}.$ Formula (\ref{eq:finalint}) implies that the boundary $\left\lvert z_1 \right\rvert = 1$ is at finite distance, and the function $\left\lvert N_1 \right\rvert^2$ blows up there. Therefore the metric induced on $\widehat{L}$ is incomplete, hence the metric $g|_L$ is incomplete, hence $g$ is incomplete.
\end{proof}

\section{Local existence of soliton solutions}

In the previous section we showed that the static system was non-involutive in the sense of exterior differential systems. This non-involutivity lead to several compatibility conditions which placed severe restrictions on the local geometry of a static solution. By contrast, the soliton equation is involutive as shall be explained in this section. The upshot is a local existence and uniqueness theorem for soliton solutions.

The soliton equations are
\begin{equation}
	\begin{aligned}
		\frac{\partial}{\partial t} \Omega &= \lambda \Omega + \mathcal{L}_V \Omega, \\
		\frac{\partial}{\partial t} g &= \lambda g + \mathcal{L}_V g,
	\end{aligned}
\end{equation}
where $\lambda \in \R$ is a constant and $V$ is a vector field on $X.$

Let $V$ be a vector field on an almost-K\"ahler 4-manifold $X,$ thought of as a $\U(2)$-equivariant map $\mathcal{B} \to \C^2$ and let $V_i$ be the components of this map with respect to the standard basis of $\C^2.$ There exist functions $U,$ $S_{ij},$ $W_{i \bar{j}},$ and $Y$ with symmetries $W_{i \bar{i}} = 0,$ $S_{ij} = S_{ji}$ such that, on $\mathcal{B},$
\begin{equation}\label{eq:dVectorField}
	d V_i + \kappa_{i \overline{j}} V_j = \left( W_{i \bar{j}} + Y \delta_{i \bar{j}} \right) \eta_j +  \left( S_{ij} + U \epsilon_{ij} \right) \overline{\eta_j}.
\end{equation}
These functions appear in the Lie derivatives of $\Omega$ and $g$ with respect to $V$:
\begin{equation*}
	\begin{aligned}
		\mathcal{L}_V \Omega &= - \tfrac{i}{2} \left(W_{i \bar{j}} + \overline{W_{j \bar{i}}} \right) \overline{\eta_{i}} \wedge \eta_j + 2\, \Re \left(U \right) \Omega + \Im \left( \left( Y - V_i \overline{N_i} \right) \eta_1 \wedge \eta_2 \right) \\
		\mathcal{L}_V g & = 2\, \Re \left( \overline{S_{ij}} \eta_i \cdot \eta_j \right) + 4 \Re \left( \overline{\epsilon_{kj}} V_k \overline{N}_i \eta_i \cdot \eta_j \right) + \left(W_{i \bar{j}} + \overline{W}_{j \bar{i}} \right) + 2 \, \Re (U) g \overline{\eta}_i \cdot \eta_j.
	\end{aligned}
\end{equation*}

 Comparing with the symplectic curvature flow equations (\ref{eq:symplcurvfloweqs}), we see that the second-order invariants of a soliton solution satisfy
 \begin{equation}\label{eq:SolConds}
 	\begin{aligned}
 		Q_{i \bar{j}} &= -\tfrac{1}{4} \Re (W_{i \bar{j}}), & R &= \tfrac{1}{2} \Re(Y) + \tfrac{1}{4} \lambda, \\
 		A_{ij} &= -\tfrac{1}{2} S_{ij} + \tfrac{1}{2} \eps_{ik} \overline{V_k} N_j + \tfrac{1}{2} \eps_{jk} \overline{V_k} N_i, & B &= \tfrac{1}{8} U + \tfrac{1}{8} \overline{V_i} N_i.
 	\end{aligned}
 \end{equation}
Conversely, if $(X, \Omega, J)$ is an almost-K\"ahler 4-manifold with a vector field $V$ so that equations (\ref{eq:fourdCone}), (\ref{eq:structtwo}), (\ref{eq:dVectorField}), (\ref{eq:SolConds}) are satisfied on the $\U(2)$-bundle $\mathcal{B},$ then $(X, \Omega, J)$ is a soliton for symplectic curvature flow.

\begin{thm}\label{thm:SolitonEDS}
	Solitons for the symplectic curvature flow exist locally and depend on 10 functions of 3 variables in the sense of exterior differential systems.
\end{thm}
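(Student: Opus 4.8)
The plan is to set up the problem as an exterior differential system (EDS) on the $\mathrm{U}(2)$-bundle $\mathcal{B}$ and apply the Cartan–Kähler theorem, extracting generality from the last-nonzero Cartan character. The structure equations \eqref{eq:fourdCone}, \eqref{eq:structtwo} together with the vector-field derivative equation \eqref{eq:dVectorField} and the soliton constraints \eqref{eq:SolConds} should be assembled into a linear Pfaffian system whose integral manifolds are the solitons. I would think of the "free" torsion/curvature functions — after imposing \eqref{eq:SolConds} to eliminate $Q$, $R$, $A$, $B$ in favor of $W$, $Y$, $S$, $U$ and the $V_i$, $N_i$ — as the fiber coordinates of a jet-like space, and treat the tautological forms $\eta_i, \overline{\eta_i}$ as the independent (horizontal) $1$-forms, so that the $3$ complex (equivalently $6$ real, but the $\mathrm{U}(2)$-symmetry and reality cuts this down) independent variables reflect the underlying $4$-manifold plus the bundle directions that survive.

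\emph{The main technical heart} is computing the tableau of free derivatives and verifying Cartan's test. Concretely, I would differentiate each of the structure and constraint equations one more time (apply $d^2=0$ to \eqref{eq:structtwo} and $d$ to \eqref{eq:dVectorField} and \eqref{eq:SolConds}) to read off the exterior derivatives of the remaining invariants $N_i$, $K_{i\bar{j}k\bar{l}}$, $F_{ij}$, $H$, $W_{i\bar{j}}$, $Y$, $S_{ij}$, $U$, $V_i$ modulo the semibasic forms; the components of these derivatives that are \emph{not} determined by previously-appearing invariants constitute the free derivatives. Arranging these into a tableau $A \subset \operatorname{Hom}(W, V)$ (with $V$ the $4$-real-dimensional space spanned by $\eta_i,\overline{\eta_i}$ and $W$ the space of free invariants), I would compute the reduced Cartan characters $s_1 \ge s_2 \ge s_3$ and check involutivity via the equality $\dim A^{(1)} = s_1 + 2 s_2 + 3 s_3$ between the dimension of the prolongation and the Cartan bound. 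The claim that solutions depend on $10$ functions of $3$ variables means precisely that $s_3 = 10$ and $s_4 = 0$, i.e. the last nonzero character is $s_3$ and it equals $10$; the count of $3$ variables comes from $s_4 = 0$, so that the Cartan–Kähler integral manifold is built by solving, at the top level, a determined system in $3$ independent variables with $10$-dimensional Cauchy data.

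\emph{The delicate step}, and where I expect the real obstacle, is \textbf{torsion absorption and the appearance of integrability obstructions}. In the static case the paper found that differentiating the $dN_i$ equation produced the compatibility condition \eqref{eq:firsttorscond}, which is exactly a torsion term that \emph{cannot} be absorbed into the connection/tableau — this is what broke involutivity. For the soliton system the extra freedom carried by the vector field $V$ (via $W$, $Y$, $S$, $U$) must be shown to absorb precisely this torsion, so that no unabsorbable integrability condition survives. I would therefore carry out the torsion computation carefully: collect all semibasic $2$-form coefficients arising from re-differentiating the equations, and verify that each can be cancelled by a suitable shift in the free derivatives of $W_{i\bar j}, Y, S_{ij}, U$. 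Establishing that the torsion lies entirely in the image of the prolongation map — equivalently, that the symbol relations are generated in the expected way — is the crux; once torsion absorption succeeds, the tableau becomes one whose characters I can compute directly, and involutivity with $(s_1,s_2,s_3,s_4) = (\,\ast,\ast,10,0)$ follows.

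\emph{Finally}, I would invoke the Cartan–Kähler theorem (in the real-analytic category, as required) to conclude local existence, and read off the generality statement "$10$ functions of $3$ variables" from the last nonzero Cartan character $s_3 = 10$. It remains only to note that the $\mathrm{U}(2)$-equivariance of the whole setup guarantees that the local integral manifolds descend to genuine almost-Kähler soliton structures on a $4$-manifold, completing the proof of Theorem \ref{thm:SolitonEDS}.
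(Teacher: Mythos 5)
Your overall framework is the right one --- setting up the prescribed coframing problem from (\ref{eq:fourdCone}), (\ref{eq:structtwo}), (\ref{eq:dVectorField}), (\ref{eq:SolConds}), computing the tableau of free derivatives, and reading the generality off the last nonzero Cartan character --- and your identification of the primary invariants and of the descent back to an almost-K\"ahler soliton on a $4$-manifold matches the paper. However, there is a genuine gap at the step you call the ``main technical heart.'' You plan to verify Cartan's test directly on the tableau of free derivatives, expecting characters of the form $(\ast,\ast,10,0)$ once torsion is absorbed. In fact the paper computes that this tableau is \emph{not} involutive: its Cartan characters are $(24,22,13,1)$ while the dimension of its prolongation is $106$, so Cartan's inequality $106 \leq 24 + 2(22) + 3(13) + 4(1) = 111$ is strict and the test fails. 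Your argument as written would therefore stall exactly where you expect to conclude. The missing idea is \emph{prolongation}: one must prolong the system once, after which the tableau of the prolonged system has characters $(56,31,10,0)$ and prolongation dimension $148 = 56 + 2(31) + 3(10)$, so Cartan's test is passed and Theorem 3 of \cite{BryEDSNotes} applies. The figure ``$10$ functions of $3$ variables'' is the last nonzero character of the \emph{prolonged} tableau, not of the original one (whose corresponding entries are $13$ and $1$).

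A secondary, related point: you locate the expected obstacle in unabsorbable torsion, by analogy with the static case and the condition (\ref{eq:firsttorscond}). That is the wrong failure mode here. The static system fails because of genuine integrability conditions (intrinsic torsion); the soliton system has no such obstruction --- the extra freedom in $W_{i\bar{j}}$, $Y$, $S_{ij}$, $U$ does what you hope --- but it fails Cartan's test at the level of the \emph{symbol}, which is a distinct phenomenon and is remedied by prolongation rather than by absorbing torsion. Distinguishing these two failure modes, and carrying out the prolongation computation, is what your proposal would need to be complete.
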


\begin{proof}
	We first reduce the problem on constructing symplectic curvature flow solitons to a prescribed coframing problem in the style of \cite{BryEDSNotes}.
	
	By the above paragraph, the $\U(2)$-coframe bundle $\mathcal{B} \to X$ of a symplectic curvature soliton carries 1-forms $\eta$ and $\kappa$ and functions $N_i,$ $V_i,$ $W_{i\bar{j}}$, $Y$, $S_{ij},$ $U,$  $F_{ij}$ and $K_{i \bar{j} k \bar{l}}$ satisfying equations  (\ref{eq:fourdCone}), (\ref{eq:structtwo}), (\ref{eq:dVectorField}), (\ref{eq:SolConds}). Conversely, if $M$ is an 8-manifold together with 1-forms $\eta_i$ and $\kappa_{i \bar{j}}$ and functions $N_i,$ $V_i,$ $W_{i\bar{j}}$, $Y$, $S_{ij},$ $U,$  $F_{ij}$ and $K_{i \bar{j} k \bar{l}}$ satisfying equations  (\ref{eq:fourdCone}), (\ref{eq:structtwo}), (\ref{eq:dVectorField}), (\ref{eq:SolConds}), then an argument similar to Theorem 1 of \cite{BryantBochKah01} implies that (after possibly shrinking $M$) the 4-dimensional leaf space $X$ of the integrable plane field $\eta = 0$ carries an almost-K\"ahler structure which is a soliton for the symplectic curvature flow and $M$ may be identified an an open set in the $\U(2)$-bundle $\mathcal{B} \to X$ associated to this almost-K\"ahler structure.
	
	The prescribed coframing problem (\ref{eq:fourdCone}), (\ref{eq:structtwo}), (\ref{eq:dVectorField}), (\ref{eq:SolConds}) is written in a form where it is natural to attempt to apply Theorem 3 of \cite{BryEDSNotes}. The `primary invariants' are the functions $N_i,$ $V_i,$ $\Re(Y),$ $\Re(W_{i \bar{j}}),$ $U,$ $S_{ij},$ and $K_{i \bar{j} k \bar{l}}$ while the derived invariants consist of the covariant derivatives of these functions with respect to $\kappa_{i \bar{j}},$ taking the identity $d^2 \eta = 0$ into account. However, the Cartan's existence theorem cannot be applied directly because the tableau of free derivatives is not involutive. One may compute that it has Cartan characters $(24,22,13,1),$ while the dimension of the prolongation is $106,$ so Cartan's test fails. Nevertheless, the system does become involutive after one prolongation. We omit the details here due to length, but after prolongation the tableau of free derivatives has Cartan characters $(56,31,10,0)$ and the dimension of the prolongation is $148 = (56)+2(31)+3(10)+4(0),$ so Cartan's test is passed and the system is involutive. Existence and generality in the real analytic category then follows from Theorem 3 of \cite{BryEDSNotes}.
\end{proof}

\begin{remark}[Real analyticity]
	Streets--Tian \cite{StreetsTian14} prove that symplectic curvature flow is parabolic modulo diffeomorphism. Similarly, it can be shown that the soliton system is elliptic modulo diffeomorphism. Therefore, symplectic curvature flow solitons which are $C^{1, \alpha}$ in harmonic coordinates for some $\alpha > 0$ must be real analytic in those coordinates.
\end{remark}

\begin{remark}[The gradient case]
	The equation $d V^\flat = 0$ implies
	\begin{equation*}
		\begin{aligned}
			U & = \overline{V_i} N_i, &	\Im(Y) &= 0, & \Im(W_{i \bar{j}}) &= 0. & &
		\end{aligned}
	\end{equation*}
	Adjoining these equations to (\ref{eq:fourdCone}), (\ref{eq:structtwo}), (\ref{eq:dVectorField}), (\ref{eq:SolConds}) gives a prescribed coframing problem whose local solutions correspond to gradient solitons. However, in contrast to the general case, this system is not involutive even after prolongation because the equation $d^2 Y = 0$ yields a restriction on the 3-jet of a solution. In the language of exterior differential systems, this problem has \emph{intrinsic torsion}. Unfortunately, the restriction on the 3-jet is more algebraically complicated than the equations encountered in \S\ref{sect:staticsolns} and the existence and local generality of gradient solitons remains unknown.
\end{remark}

\begin{remark}[Comparison to Laplacian flow]
	Symplectic curvature flow has a formal similarity to the Laplacian flow of closed $\G_2$-structures in that both are flows of geometric structures with torsion defined by closed differential forms. This formal similarity extends to the local existence theory for static and soliton solutions. The static solutions to Laplacian flow are the \emph{eigenforms}, analyzed in \cite{BallQuad23} where it was shown that the relevant EDS is not involutive. By contrast, the general Laplace soliton system is well-behaved. In a recent paper Bryant \cite{bryant2023generality} has shown that the EDS describing Laplace solitons is involutive.
\end{remark}

\subsection{An example}\label{eg:ASL2R}
	Let $\mathrm{G} = \mathrm{SL}_2 \R \ltimes \R^2$ denote the group of volume preserving affine transformations of $\R^2$ and write the left-invariant Maurer-Cartan form $\mu$ of $\G$ as
	\begin{equation*}
		\mu = \begin{bmatrix}
			0 & 0 & 0 \\
			\alpha_1 & \alpha_3 & \beta - \alpha_4 \\
			\alpha_2 & - \beta - \alpha_4 & -\alpha_3
		\end{bmatrix}.
	\end{equation*}
	Let $\mathrm{S}^1$ denote the circle subgroup of $\G$ generated by the action of the vector field dual to $\beta.$ For each pair of non-zero numbers $(a,b) \in \R^2$ the 2-form $\Omega_{a,b}$ and metric $g_{a,b}$ defined by
	\begin{equation*}
		\begin{aligned}
			\Omega_{a,b} &= a^2 \, \alpha_1 \wedge \alpha_2 + b^2 \, \alpha_3 \wedge \alpha_4, \\
			g_{a,b} &= a^2 \left( \alpha_1^2 + \alpha_2^2 \right) + b^2 \left( \alpha_3^2 + \alpha_4^2 \right),
		\end{aligned}
	\end{equation*}
	descend to the 4-dimensional quotient $X = \G / \mathrm{S}^1$ to define an almost K\"ahler structure on $X.$ The $U(2)$-invariants of this structure may be computed via the Maurer-Cartan equation $d \mu = - \mu \wedge \mu.$ For this structure, we find that the right hand side of the symplectic curvature flow equations are given by
	\begin{equation*}
		\begin{aligned}
			4 R \, \Omega + 4 i Q_{i \bar{j}} \, \overline{\eta_i} \wedge \eta_{j} - 8 \, \Im \left(\overline{B} {\eta_1 \wedge \eta_2 } \right) &= 4 \, \alpha_3 \wedge \alpha_4, \\
			4 R \, g - 8 Q_{i \overline{j}} \overline{\eta_i} \cdot \eta_j - 4 \Re ( A_{ij} \overline{\eta_i} \cdot \overline{\eta_{j}}) &= 4 \left(\alpha_3^2 + \alpha_4^2 \right).
		\end{aligned}
	\end{equation*}
	Therefore, the 1-parameter family of almost-K\"ahler structures on $X$ defined by
	\begin{equation*}
		\begin{aligned}
			\Omega(t) &= a^2 \, \alpha_1 \wedge \alpha_2 + \sqrt{4 t + b^4} \, \alpha_3 \wedge \alpha_4, \\
			g(t) &= a^2 \left(\alpha_1^2 + \alpha_2^2 \right) + \sqrt{4 t + b^4} \left(\alpha_3^2 + \alpha_4^2 \right).
		\end{aligned}
	\end{equation*}
	gives a solution to symplectic curvature flow with initial condition $\Omega(0) = \Omega_{a,b},$ $g(0) = g_{a,b}.$ For any $a,$ $a'$ the structures $(\Omega_{a,b}, g_{a,b})$ and $(\Omega_{a',b}, g_{a',b})$ are diffeomorphism equivalent, so in fact each $(\Omega(t), g(t))$ defines a soliton solution to symplectic curvature flow.
	
	\begin{remark}
		The manifold $X$ defined above is non-compact and $\G$ does not admit a uniform lattice, so it has no compact quotients. However, $\G$ does admit cocompact lattices $\Gamma$ (for example $\Gamma = \mathrm{SL}_2 \mathbb{Z} \ltimes \mathbb{Z}^2$) and these give quotients $\Gamma \backslash X$ with finite volume. The 1-parameter family of almost-K\"ahler structures $(\Omega(t), g(t))$ descends to each $\Gamma \backslash X$ to give a solution of symplectic curvature flow, however the diffeomorphism between the structures $(\Omega_{a,b}, g_{a,b})$ and $(\Omega_{a',b}, g_{a',b})$ does not descend to $\Gamma \backslash X,$ so $(\Omega(t), g(t))$ does not give a soliton solution on $\Gamma \backslash X.$
	\end{remark}

\bibliography{SympCurvStatRefs}

\end{document}